\theoremstyle{plain}      
\newtheorem{theorem}{Theorem}[section]      
\newtheorem{lemma}{Lemma}[section]      
\newtheorem{corollary}[theorem]{Corollary}      
\newtheorem{proposition}{Proposition}[section]
\newtheorem{definition}{Definition}[section]          
\theoremstyle{remark}      
\newtheorem{remark}{Remark}[section]
\newcommand{\Q}{{\mathbb{Q}}}        
\newcommand{\Z}{{\mathbb{Z}}}   
\newcommand{\C}{{\mathbb{C}}}      
\newcommand{\R}{{\mathbb{R}}}
\begin{document}

\date{\today}

\title[Two questions on mapping class groups]{Two questions on mapping class groups}
%\footnote{     
%This version: {March 2010}. Preprint available electronically at       
%          {\rm http://www-fourier.ujf-grenoble.fr/\~{ }funar }}}      
 \author[L.Funar]{Louis Funar}     
\address{Institut Fourier BP 74, UMR 5582, 
University of Grenoble I, 
38402 Saint-Martin-d'H\`eres cedex, France}      
\email{funar@fourier.ujf-grenoble.fr}

\begin{abstract}
We show that central extensions of the 
mapping class group $M_g$ of the closed orientable surface of genus $g$ 
by $\Z$ are residually finite. 
Further we give rough estimates 
of the largest $N=N_g$ such that homomorphisms 
from $M_g$  to $SU(N)$ have finite image. 
In particular, homomorphisms of $M_g$ into $SL([\sqrt{g+1}],\C)$ 
have finite image. 
Both results come from properties of quantum 
representations of mapping class groups.

\vspace{0.1cm}
\noindent {\bf 2000 MSC Classification}: 57 M 07, 20 F 36, 20 F 38, 57 N 05.  
 
\noindent {\bf Keywords}:  Mapping class group, central extension, quantum 
representation.

\end{abstract}

\maketitle

\section{Introduction and statements}

Set $\Sigma_{g}^r$ for the 
orientable surface of genus $g$ with $r$ 
punctures. We denote by 
$M_{g}^r$ the mapping class group of $\Sigma_{g}^r$, namely 
the group of isotopy classes of homeomorphisms that fix 
the punctures.

The following  answers Question 6.4 of Farb (see Chapter 2 of \cite{Fa}).

\begin{proposition}\label{res}
The central extensions of the mapping class group $M_g$ (or the punctured 
mapping class group $M_{g}^1$, for $g\geq 4$) by $\Z$ are residually finite.
\end{proposition}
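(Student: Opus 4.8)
The plan is to use the residual finiteness of $M_g$ itself (Grossman's theorem) to reduce the problem to separating the central $\Z$, and then to manufacture the needed finite quotients from quantum representations together with their integrality. Fix a central extension $1 \to \Z \xrightarrow{\iota} G \xrightarrow{\pi} M_g \to 1$. Any $x \in G$ with $\pi(x) \neq 1$ is separated from the identity by composing $\pi$ with a finite quotient of $M_g$ on which $\pi(x)$ survives, so the only real issue is to separate the nontrivial central elements $\iota(n)$, $n \neq 0$. Equivalently, it suffices to exhibit finite quotients of $G$ in which the image of the generator of the central $\Z$ has arbitrarily large order: if that image has order $e>|n|$, then the image of $\iota(n)$, an $n$-th power, is nontrivial for $0<|n|<e$.

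Next I would reduce to a single extension. Since $M_g$ is perfect and $H_2(M_g;\Z)=\Z$ for $g\geq 4$ (Harer), universal coefficients give $H^2(M_g;\Z)=\Z$, so central $\Z$-extensions are classified by an integer $m$. The case $m=0$ is the direct product $\Z\times M_g$, which is residually finite. For $m\neq 0$ the universal extension $U=G_1$ embeds in $G_m$ as a subgroup of index $|m|$, the generator of $Z(U)$ being the $m$-th power of that of $Z(G_m)$. As residual finiteness passes from a finite-index subgroup to the ambient group (intersect the separating subgroups with their normal cores), it is enough to prove that $U$ is residually finite, i.e. to detect its central generator in finite quotients.

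The finite quotients come from quantum representations. Each Reshetikhin--Turaev representation of $M_g$ is projective and linearizes a central $\Z$-extension $\widetilde M_g=G_a$, on which the central generator acts by a root of unity $\zeta$ whose order $d$ grows without bound with the level (the framing anomaly, governed by the central charge), while the class $a$ stays bounded independently of the level. By the integrality of quantum representations (Gilmer--Masbaum) one may assume the representation takes values in $GL(N,\mathcal{O})$, with $\mathcal{O}$ the ring of integers of a cyclotomic field. Choosing a prime ideal $\mathfrak p\subset\mathcal{O}$ over a rational prime $p\nmid d$ and reducing modulo $\mathfrak p$ yields a homomorphism $\widetilde M_g\to GL(N,\mathbb{F}_q)$ with finite image in which the central generator still has order exactly $d$, since $p\nmid d$ leaves the order of a root of unity intact. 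Transferring along the inclusion $U\hookrightarrow\widetilde M_g$, the generator of $Z(U)$ maps to $\zeta^{a}$, of order $d/\gcd(d,a)\geq d/|a|\to\infty$. This gives finite quotients of $U$ in which its central generator has arbitrarily large order, so $U$, and hence every $G_m$, is residually finite.

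The main obstacle is the control of orders under reduction: one must know that the anomaly genuinely forces roots of unity of unbounded order across levels, select the prime $\mathfrak p$ coprime to that order so the order survives the reduction (which is exactly where integrality is indispensable), and then follow the order through the index-$|a|$ transfer into $U$. For the punctured group $M_g^1$ the same argument applies once $H^2(M_g^1;\Z)=\Z$, which is what imposes the restriction $g\geq 4$ there.
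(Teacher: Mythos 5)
Your closed-surface argument is correct and follows the paper's skeleton (reduce an arbitrary extension to the universal one $\widetilde{M}_g(1)$, which sits with index $|m|$ inside $\widetilde{M}_g(m)$ with central generator mapping to the $m$-th power, exactly as in the paper via Gervais' presentations), but it replaces both of the paper's key tools. Where the paper detects \emph{every} nontrivial element of $\widetilde{M}_g(12)$ by combining the asymptotic faithfulness theorem of Andersen and Freedman--Walker--Wang with Malcev's theorem on finitely generated linear groups, you split the separation problem: non-central elements are handled by Grossman's classical residual finiteness of $M_g$, so the quantum representations are only needed to detect the center, and for that the Masbaum--Roberts computation of the anomaly (the central generator of $\widetilde{M}_g(12)$ acting by a root of unity of order $2k$, unbounded in the level, while the extension class stays fixed at $12\chi$) suffices --- no faithfulness statement is required. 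In place of Malcev you invoke Gilmer--Masbaum integrality to realize the representation over a ring of cyclotomic integers and reduce modulo a prime $\mathfrak{p}$ coprime to the order $d$ of the anomaly, which indeed preserves the order of a scalar root of unity and so produces finite quotients of $\widetilde{M}_g(1)$ in which the central generator has order $d/\gcd(d,12)\to\infty$. This is a genuine trade: you avoid the hard asymptotic faithfulness theorem at the cost of the (also nontrivial) integral TQFT lattices, which exist for the $SO(3)$ theories at odd primes --- enough for your purposes; the paper's Malcev route needs no integrality at all. Your transfer of residual finiteness up a finite-index inclusion (via normal cores) is a standard and valid substitute for the paper's induction of finite representations.

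Two genuine defects remain. First, the punctured case: you assert $H^2(M_g^1;\Z)=\Z$ for $g\geq 4$, but by Korkmaz--Stipsicz (cited in the paper itself) $H^2(M_g^1)=\Z\chi\oplus\Z e\cong\Z^2$, where $e$ is the puncture class. So central $\Z$-extensions of $M_g^1$ form a rank-two lattice of classes, and your classification-plus-universal-extension reduction does not go through verbatim; one must also realize the classes involving $e$ (the paper does this, sketchily, via quantum representations with a colored marked point, covering classes of the form $n\chi+e$). As written, your proof does not cover the punctured half of the statement. Second, a smaller point: your classification of extensions uses $H_2(M_g)=\Z$, valid for $g\geq 3$ (Harer, Korkmaz--Stipsicz), so the low-genus closed cases are untreated --- the paper handles $g=1$ separately via $B_3$ and relies on $H^2(M_g)$ being generated by $\chi$ for $g\geq 2$, where the class is torsion when $g=2$. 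Neither defect touches your main mechanism for closed surfaces of genus $g\geq 3$, which is sound.
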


\begin{remark}
The universal central extension $\widetilde{M}_g(1)$ surjects onto the 
universal central extension  $\widetilde{Sp(2g,\Z)}$ of 
the (integral) symplectic group, whose class is the Maslov class 
(generating $H^2(Sp(2g,\Z))$.  
It is known that  $\widetilde{Sp(2g,\Z)}$ is the pull-back of 
$Sp(2g,\Z)$ into the universal covering  $\widetilde{Sp(2g,\R)}$
of the real symplectic group.

By a result of Deligne (see \cite{De}) the extension  
$\widetilde{Sp(2g,\Z)}$, for $g\geq 2$, is {\em not} residually 
finite since any finite index subgroup of it contains 
$2\Z$, where $\Z$ is the central kernel  
$\widetilde{Sp(2g,\Z)}\to Sp(2g,\Z)$. 
The same holds, more generally, for some other 
arithmetic groups having the congruence subgroup property.    
\end{remark}

The method of proof uses quantum representations of mapping class groups.

\begin{definition}
The group $\Gamma$ has property $F_n$ if all homomorphisms 
$\Gamma\to PU(n)$ have finite image. Moreover, the group $\Gamma$ 
has property $F$ if it has property $F$ for every $n$. 
\end{definition}

Observe that the property $F$ is inherited by 
finite index subgroups. 

\begin{remark}
Let $G$ be a connected, semi-simple, 
almost $\Q$-simple algebraic $\Q$-group and $\Gamma$ an arithmetic 
lattice in $G$. If $G_{\R}$ has real rank at least 2 and $G^{ad}(\R)$ 
has no compact factor then $\Gamma$ has property $F$. This follows from 
(\cite{Mar}, Chap. VIII, Thm.B) for $K=\Q, l=\R$, $S$ containing only 
the Archimedian place of $\Q$ and ${\bf H}=PO(n)$. 
In particular, any discrete group $\Gamma$ commensurable with $Sp(2g,\Z)$ 
for $g\geq 2$ or to $SL(2,{\mathcal O})$, 
where ${\mathcal O}$ is the ring of integers 
in a totally real  number field of degree at least 2, has property $F$. 
\end{remark}

Mapping class groups have not property $F$. It is therefore interesting 
to understand whether they have property $F_n$ for some $n$. 
This is related to a question of Farb in \cite{Fa} 
concerning linear representations in small degree.  
The previous remark shows that we cannot use 
unitary representations of $M_g$ that factor 
through $Sp(2g,\Z)$,  as the later group has no finite dimensional 
unitary representations with infinite image.   
Our second result states as follows:   

\begin{proposition}\label{fa}
The maximal number $N_g$ for which  $M_g$ has property $F_{N_g}$ satisfies: 
\[ \sqrt{g+1} \leq N_g < \left\{\begin{array}{ll}
5^{g/2}F_{g-1}, \mbox{\rm if } g \:\mbox{\rm is even},\\
5^{(g-1)/2}(F_{g}+F_{g-2}), \mbox{\rm if } g \:\mbox{\rm is odd},
\end{array}\right.\]
where $F_j$ is the Fibonacci sequence, defined by 
$F_0=0,F_1=1$ and the recurrence $F_{n+1}=F_n+F_{n-1}$, for $n\geq 1$. 
Moreover, the upper bounds are valid for any finite index subgroup 
of $M_g$. 
\end{proposition}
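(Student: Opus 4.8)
The plan is to treat the two inequalities independently, since they draw on opposite features of the situation: the lower bound is a rigidity statement (small unitary representations are finite), while the upper bound is a construction (an explicit infinite unitary representation of controlled dimension). Both are governed by two facts already available: that $Sp(2g,\Z)$ has property $F$ for $g\geq 2$ (the Remark), and that the $SU(2)$-type quantum representations of $M_g$ exist and have infinite image. A preliminary observation I would record first is that property $F_n$ is downward closed: given $\psi:M_g\to PU(n-1)$, lift it through the universal central extension to $\widetilde{M}_g\to U(n-1)$, pad with a trivial block to land in $U(n)$, and project to $PU(n)$; the central kernel maps into the center of $U(n)$ and so dies in $PU(n)$, whence the map descends to $M_g\to PU(n)$ with the same image. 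Thus a single infinite-image representation into $PU(D)$ forces property $F_m$ to fail for all $m\geq D$, so $N_g<D$; and the identical argument applies verbatim to any finite-index subgroup through its own universal central extension.

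For the upper bound I would take the $SO(3)$ quantum representation of $M_g$ at a fifth root of unity (the $p=5$, or ``Fibonacci'', theory), which is the smallest nontrivial level and hence yields the sharpest upper bound. Its dimension is the dimension of the corresponding space of conformal blocks, computed by the Verlinde formula; because the relevant quantum integers at $p=5$ are governed by the golden ratio, this dimension is exactly
\[
D_g=\begin{cases}5^{g/2}F_{g-1}& g\text{ even},\\[2pt] 5^{(g-1)/2}(F_g+F_{g-2})& g\text{ odd}.\end{cases}
\]
The essential input is that this representation has \emph{infinite} image, which is the key property established for quantum representations of mapping class groups. Granting that, $M_g$ fails property $F_{D_g}$, so by the downward-closure remark $N_g<D_g$. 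For a finite-index subgroup $H\leq M_g$ the restriction of this representation has image of finite index in an infinite group, hence is again infinite, giving the same strict upper bound for $H$.

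For the lower bound I would show that any unitary representation $\phi:M_g\to PU(n)$ with $n^2-1<g$ has finite image, splitting into two cases according to the Torelli group $\I_g$. If $\phi$ is trivial on $\I_g$ it factors through the symplectic quotient $M_g\to Sp(2g,\Z)$, and then it has finite image immediately by property $F$ of $Sp(2g,\Z)$ (the Remark); no hypothesis on $n$ is needed here. The substantive case is when $\phi$ is nontrivial on $\I_g$, where I would argue that this forces $n^2\geq g+1$. The plan is to analyze the images of the $2g+1$ Dehn twists along a chain of curves: they are pairwise conjugate in $M_g$, satisfy the braid and commutation relations, and satisfy the hyperelliptic chain relation, which pins the image of the chain product to a finite-order element of $PU(n)$. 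These constraints bound the amount of independent eigenvalue data carried by a representation into a group of rank $n-1$, and I would convert ``$\phi$ detects the Torelli group'' into the existence of $g$ genuinely independent such directions, forcing $n^2-1\geq g$ on pain of the representation being trivial on $\I_g$ and hence (by the first case) finite.

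The main obstacle is precisely this last dimension estimate: isolating the correct combinatorial invariant of the chain-twist images and showing it must be at least $g$ whenever $\I_g$ is seen nontrivially, so that the $\sqrt{g+1}$ threshold is what the eigenvalue-counting actually delivers rather than a weaker logarithmic or stronger linear bound. On the upper-bound side the only genuinely hard ingredient, namely infiniteness of the $p=5$ quantum image, is taken as known; everything else there is bookkeeping with the Verlinde formula and the downward-closure remark.
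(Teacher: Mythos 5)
Your upper bound coincides with the paper's: the level--5 $SO(3)$ (Fibonacci) representation, its Verlinde dimension $d_g$, infiniteness of its image from \cite{F}, and the observation that restriction to a finite-index subgroup stays infinite. The genuine gap is in your lower bound, and it sits exactly where you flag it. The paper does not argue via the Torelli group at all: it proves Proposition \ref{faa} --- a finitely generated group with property $FA_{n^2-1}$ has only finite images in $SL(n,\C)$ --- and combines it with Bridson's theorem (\cite{B2}) that $M_g$ has strong $FA_g$, which yields $n\leq\sqrt{g+1}\Rightarrow$ finite image directly. The threshold $n^2-1$ there is the real dimension of the symmetric space $SL(n,\C)/SU(n)$, and the proof of \ref{faa} is a substantial arithmetic argument: specialization of the finitely generated coefficient algebra to number fields, Jordan's theorem to extract a normal abelian subgroup, a Hilbert-irreducibility argument (Lemma \ref{crucial}) showing that a matrix all of whose specializations have finite order has roots-of-unity eigenvalues and is semisimple (via Jordan--Chevalley), then fixed points on Bruhat--Tits buildings at non-Archimedean places and on $SL(n,\C)/SU(n)$ at complex places, and finally discreteness in the adeles. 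Your proposed substitute --- eigenvalue bookkeeping for the $2g+1$ chain twists, constrained by the braid and chain relations, forcing $n^2\geq g+1$ whenever $\I_g$ is detected --- is not carried out, and there is no visible mechanism by which it could produce the $n^2$ threshold: the simultaneous eigenvalue data of a family of elements of $PU(n)$ lives in tori of rank $n-1$, the braid/commutation relations among twist images are satisfiable in any fixed dimension, and the estimate would have to work even when all twist images have finite order (which defeats the usual eigenvalue-rigidity arguments). So the substantive half of the proposition is, in your write-up, an unproven plan rather than a proof, and it is a different (and in my view unworkable as sketched) route from the paper's.

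A secondary error: your preliminary ``downward closure'' observation is wrong as stated. If the central generator $z$ acts on the lift $\widetilde{M}_g\to U(n-1)$ by the scalar $\lambda$, then padding with a trivial block sends $z$ to $\lambda I_{n-1}\oplus 1$, which is central in $U(n)$ only when $\lambda=1$; the padded map therefore does not descend to $M_g\to PU(n)$, and you cannot fix it by twisting with a character, since $\widetilde{M}_g(1)$ is perfect for $g\geq 3$. What is true and easy is that an infinite image in $PU(D)$ propagates to $PU(rD)$ via $A\mapsto A\otimes I_r$, and to $PU(m)$ for all $m\geq D^2-1$ via the (faithful, unitary) adjoint representation $PU(D)\hookrightarrow U(D^2-1)$ followed by block padding; for intermediate $m$ the failure of $F_m$ is not automatic. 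The paper sidesteps this by reading the exhibited failure at $d_g$ as capping $N_g$, so if you want your stronger monotonicity statement you must repair the padding argument along the lines above.
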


\begin{corollary}\label{fin}
Every homomorphism $M_g\to SL([\sqrt{g+1}],\C)$ has finite image, 
if $g\geq 1$.
\end{corollary}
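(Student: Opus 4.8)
The plan is to deduce the corollary from the lower bound in Proposition~\ref{fa} together with a reduction from complex linear to unitary representations. Write $N=[\sqrt{g+1}]$. For $g\in\{1,2\}$ one has $N=1$ and $SL(1,\C)$ is trivial, so there is nothing to prove; hence I assume $g\geq 3$ and $N\geq 2$. Since $N\leq\sqrt{g+1}\leq N_g$, Proposition~\ref{fa} guarantees that $M_g$ has property $F_N$, that is, every homomorphism $M_g\to PU(N)$ has finite image. The entire task is therefore to upgrade this unitary statement to an arbitrary homomorphism $\rho\colon M_g\to SL(N,\C)$.

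First I would pass to the projective picture. The center of $SL(N,\C)$ is finite, and an extension of a finite group by a subgroup of that center is again finite; consequently it suffices to prove that the induced homomorphism $\bar\rho\colon M_g\to PSL(N,\C)$ has finite image. Now $PU(N)$ is a maximal compact subgroup of $PSL(N,\C)$, and any relatively compact subgroup of $PSL(N,\C)$ is conjugate into $PU(N)$. Thus, once I know that $\bar\rho(M_g)$ is relatively compact, I may conjugate it into $PU(N)$ and invoke property $F_N$ to conclude that $\bar\rho$, and hence $\rho$, has finite image.

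The hard part will be precisely this boundedness step, since property $F_N$ only constrains representations that are already unitary. To control the Zariski closure $G$ of $\rho(M_g)$ I would exploit the internal structure of $M_g$. As $H_1(M_g;\Z)=0$ for $g\geq 3$, the group $M_g$ is perfect, so $G$ carries no nontrivial character and $\rho$ lands in the derived group of $G$, forcing $G$ to be (up to finite index) semisimple and ruling out a central torus. Moreover $M_g$ is generated by Dehn twists about non-separating curves, which form a single conjugacy class and satisfy the chain and lantern relations; these relations constrain the eigenvalues of the image of such a twist to be roots of unity, so every generator maps to a quasi-unipotent element. The remaining difficulty is that quasi-unipotence and perfectness alone do not preclude a non-compact image (witness discrete unipotent groups), so I would combine them with the Tits alternative for the finitely generated linear group $\rho(M_g)$ and with the smallness of the dimension $N\leq\sqrt{g+1}$ to exclude both an unbounded semisimple factor and an infinite virtually nilpotent image. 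Showing that these ingredients leave no room for an unbounded image, so that $G$ is compact and $\rho(M_g)$ is conjugate into $SU(N)$, is the crux of the argument; property $F_N$ then finishes the proof.
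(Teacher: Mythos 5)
There is a genuine gap here, and you have in fact located it yourself: the ``boundedness step'' you defer is not a missing detail but the entire content of the statement, and the ingredients you assemble cannot deliver it. Property $F_N$ constrains only homomorphisms that already land in $PU(N)$; it says nothing about an abstract $\rho\colon M_g\to SL(N,\C)$ until relative compactness of $\rho(M_g)$ is established, which is exactly what remains unproved. Moreover, the structural facts you invoke --- perfectness, quasi-unipotent generators in one conjugacy class, the Tits alternative --- demonstrably do not force a bounded image: $Sp(2g,\Z)\subset SL(2g,\C)$ is perfect for $g\geq 3$ and generated by unipotent transvections forming a single conjugacy class, yet it is unbounded. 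So any correct argument must exploit the precise relation between $N$ and $g$ through an actual theorem, not merely the ``smallness of the dimension.'' In addition, your claim that the chain and lantern relations force the eigenvalues of the image of a Dehn twist to be roots of unity is asserted without proof; the input actually available (Bridson's result, Prop.~6.1 of \cite{B2}) gives only that eigenvalues are \emph{algebraic}, a much weaker conclusion, and even quasi-unipotence of generators would not by itself rule out unbounded image, as the symplectic example shows.

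The paper's route is different and avoids property $F_N$ entirely. With $n=[\sqrt{g+1}]$ one has $n^2-1\leq g$, so Bridson's theorem that $M_g$ has strong $FA_g$ gives property $FA_{n^2-1}$, and the corollary is then an immediate application of Proposition \ref{faa}. The non-unitary difficulty you struggle with is resolved there as follows: $\rho(M_g)$ lies in $SL(n,A)$ for a finitely generated $\Q$-algebra $A$; a specialization argument (Noether normalization, Hilbert irreducibility, Lemma \ref{crucial}, plus Jordan's theorem) reduces everything to images in $SL(n,K)$ for number fields $K$; and over a number field each place is handled by a CAT(0) fixed-point argument --- the Bruhat--Tits building of dimension $n-1$ at non-Archimedean places, and the symmetric space $SL(n,\C)/SU(n)$ of dimension $n^2-1$ at complex places --- so the image is discrete and precompact in the adeles, hence finite. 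Note finally that the lower bound $\sqrt{g+1}\leq N_g$ in Proposition \ref{fa}, which you take as your starting point, is itself deduced in the paper from Proposition \ref{faa}; your proposal in effect tries to reconstruct Proposition \ref{faa} from one of its own corollaries, whereas the compactness at the Archimedean place genuinely comes from the fixed-point property in dimension $n^2-1$, not from unitary finiteness.
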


It is likely that $N_g$ behaves like an exponential for large $g$. 
This seems difficult to check because 
very few unitary representations of $M_g$ are known. 
On the other hand one might expect that the maximal 
$n$ with the property than every 
homomorphism $M_g\to SL(n,\C)$ has finite image 
is  a linear  function on $g$.

Notice that groups having  homomorphisms with 
infinite image into $SL(2,\C)$ have not the 
property $T$ of Kazhdan. However, $M_g$ has no such representations, 
if $g\geq 3$, by the Corollary above.

Results of similar flavor were proved in \cite{FLM} where it is shown 
that representations $M_g\to GL(2\sqrt{g-1},\C)$ cannot be faithful 
and in \cite{B2} where it is shown that  the image of an element 
of $M_g$ under a representation into $GL(g,\C)$ should have algebraic 
eigenvalues.

One inequality above is an immediate consequence of a theorem of Bridson 
(\cite{B2}) concerning the property $FA_n$, which was 
introduced by Farb in \cite{Fa2}. 
The second inequality comes from the existence of quantum representations 
of $M_g$ with infinite image (\cite{F}).

\section{Proof of Proposition \ref{res}}

We prove the claim for the universal central extension first. 
This is known when $g=1$ since the universal 
central extension is isomorphic to the braid group $B_3$.

An important result  due independently to Andersen (\cite{A}) and 
to Freedman, Walker and Zhang  (\cite{FWW}) states that the
$SU(2)$ TQFT representation of the mapping class group
is {\em asymptotically faithful}. Specifically, there is a sequence
of  representations $\rho_k$ (indexed by an integer $k$, called the level)
$\rho_k:M_g\to PU(N(k,g))$ into the projective unitary group 
of dimension $N(k,g)$ (for some $N(k,g)$ depending exponentially on $k$)
such that the intersection of  the kernels $\cap_{k\geq 2}\ker \rho_k$ is 
trivial for $g\geq 3$, and respectively the center of the mapping class group 
$M_2$ (which is a group of order two generated 
by hyperelliptic involution), when $g=2$. Moreover, for $g=2$ 
we can use the $SU(n)$ TQFT representation, with $n\geq 3$,  
for  which the intersection of the kernels above is trivial (see \cite{A}). 
When using this result we will say that we make use of 
the asymptotic faithfulness (of 
the quantum representations).

Each  quantum representation is a projective 
representation which lifts to a linear
representation $\widetilde{\rho}_k: \widetilde{M}_g(12)\to U(N(k,g))$ of the 
central extension $\widetilde{M}_g(12)$ of the mapping class 
group $M_g$ by $\Z$. 
The later representation corresponds to invariants of 3-manifolds 
with a $p_1$-structure. Masbaum, Roberts (\cite{MR}) and 
Gervais (\cite{Ge}) gave a precise  description 
of this extension.  Namely, the 
cohomology class $c_{\widetilde{M}_g(12)}\in 
H^2(M_g,\Z)$  associated to this extension equals 
12 times the  signature class $\chi$. 
It is known (see \cite{KS}) that the group $H^2(M_g)$ is generated by 
$\chi$, when $g\geq 2$.
Recall that $\chi$ is the class of one fourth the Meyer signature cocycle.

Observe that the $\rho_k$ action of the center of $\widetilde{M}_g(12)$ is by
roots of unity of order $2k$ (see \cite{MR} for the explicit formula). 
In fact, this action corresponds to the change of the $p_1$-structure of a 
3-manifold and it is well-known that the quantum invariant changes by a 
root of unity of order $2k$.  
Thus  every element of the center acts non-trivially via $\widetilde{\rho}_k$,  
for large enough $k$, so that 
the representations of $\widetilde{M}_g(12)$ are also asymptotically faithful.
This implies that $\widetilde{M}_g(12)$ is residually finite.  
In fact, let $a\in \widetilde{M}_g(12)$ be any element $a\neq 1$. 
By the asymptotic faithfulness 
there exists some level $k$ so that 
$\widetilde{\rho}_k(a)\in U(N(k,g))$ is non-trivial. 
The subgroup $\widetilde{\rho}_k(\widetilde{M}_g(12))\subset U(N(k,g))$ is 
a discrete linear group and thus, by a classical theorem of Malcev, it 
is residually finite. In particular, there exists a homomorphism of 
$\widetilde{\rho}_k(\widetilde{M}_g(12))$ onto some finite group  
sending $\widetilde{\rho}_k(a)$ into a non-trivial element. 
This shows that every 
non-trivial element of $\widetilde{M}_g(12)$ is detected by some  
homomorphism into some finite group.

The universal central extension  is $\widetilde{M}_g(1)$, 
where $\widetilde{M}_g(n)$ denotes the central extension 
by $\Z$ whose class is $c_{\widetilde{M}_g(n)}=n\chi$. 
It is immediate from  their explicit presentations (see \cite{Ge})
that $\widetilde{M}_g(d)$ embeds 
into $\widetilde{M}_g(n)$ if $d$ divides $n\neq 0$. Such an embedding sends the 
generator $z$ of the center into $z^{n/d}$.  
In particular,  $\widetilde{M}_g(1)$ embeds in $\widetilde{M}_g(12)$ and thus 
the universal central extension is residually finite.

Now, an arbitrary central extension of $M_g$ by $\Z$ 
is either trivial and hence 
residually finite, or else isomorphic to 
$\widetilde{M}_g(n)$, for some $n\in \Z\setminus\{0\}$.  
We observed above that there is an injective 
homomorphism $\widetilde{M}_g(1)\to \widetilde{M}_g(n)$, which sends the 
central element $z$ into $z^n$. Moreover, the image is a normal 
subgroup of $\widetilde{M}_g(n)$. In particular, we have  
$\widetilde{M}_g(n)/\widetilde{M}_g(1)=\Z/n\Z$. This implies that 
$\widetilde{M}_g(n)$ is residually finite. In fact, any element 
of $\widetilde{M}_g(n)$ which is not detected by the 
homomorphism onto $\Z/n\Z$ belongs to $\widetilde{M}_g(1)$. 
Inducting finite groups representations from $\widetilde{M}_g(1)$ 
to $\widetilde{M}_g(n)$ we obtain  finite group representations of the 
later detecting every non-trivial element of $\widetilde{M}_g(1)$.
This proves the claim.

\begin{remark}
Freedman, Walker and Zhang already observed in \cite{FWW} that a simple  
consequence of the  asymptotic faithfulness is  that $M_g$ is 
residually finite. 
\end{remark}

\begin{remark}
This proof works more generally for the punctured mapping 
class group $M_{g}^1$ and for those extensions whose cohomology classes are  
of the form $n\chi+e$, for some $n\in\Z$.  Recall that  
$H^2(M_{g}^1)=\Z\chi\oplus\Z e$, where $\chi$   
is the signature class and 
$e$ is the class associated to the puncture, for $g\geq 4$ (see \cite{KS}). 
\end{remark}

\begin{remark}
Notice that there exist  quantum type 
representations of $Sp(2g,\Z)$, for instance 
those associated to the monodromy of  level $k$ theta functions 
in the $U(1)$ gauge theory (see e.g. \cite{Fu,Fu2}).  
Again these are only projective  unitary representations  
which lift to  unitary representations of some central extension  
$\rho_{Sp, k}:\widetilde{Sp(2g,\Z)}(4)\to U(k^g)$. Here 
$\widetilde{Sp(2g,\Z)}(4)$ is the central extension of $Sp(2g,\Z)$ by $\Z$ 
whose class is 4 times the Maslov class. 
However, these representations factor through the 
integer metaplectic group. Further the  generator of the 
kernel of $\widetilde{Sp(2g,\Z)}(4)\to Sp(2g,\Z)$
acts as the multiplication by a root of unity of order $8$, 
for any level $k$.  Thus  the intersection of 
$\cap_{k\geq 2}\rho_{Sp,k}$ is $2\Z$, and the result of Deligne 
cited above shows that this is sharp. 
\end{remark}

\section{Proof of Proposition \ref{fa}}

We consider  first the following notion introduced by Farb in \cite{Fa2}:

\begin{definition}
Let $n\geq 1$. A group $\Gamma$ has property $FA_{n}$ if any isometric action 
on any $n$-dimensional CAT(0) cell complex $X$ has a fixed point. 
\end{definition}

Observe that property $FA_1$ corresponds to Serre's property $FA$, 
which asks that any action without inversions of $\Gamma$ on a 
real tree should fix a vertex. 
Notice that Kazhdan groups have property $FA$.  
Moreover if a group has property $FA_n$ then it has property $FA_k$ for 
all $k<n$. It is known (see \cite{Fa2}) that a group $\Gamma$ with property 
$FA_{n-1}$ has $n$-integral representation type, namely the eigenvalues 
of matrices in $\rho(\Gamma)$, for 
a homomorphism $\rho:\Gamma\to GL(n,K)$ with $K$ a field, are 
algebraic integers if ${\rm char}(K)=0$.  Moreover, there are only 
finitely many conjugacy classes of irreducible representations 
 of $\Gamma$ into $GL(n,K)$, for an algebraically closed field $K$.

Culler and Vogtmann  proved that $M_g$ has property $FA_1$ 
in \cite{CV}. In \cite{Fa} one asks to estimate the maximal $n=n(g)$ for which 
$M_g$ has property $FA_n$.

There is a version of $FA_n$, namely the strong $FA_n$ (which implies $FA_n$), 
in which one considers complete CAT(0) spaces and  semi-simple actions.
It is proved by Bridson in (\cite{B2}, see also \cite{B1}) 
that $M_g$ has strong $FA_g$. Moreover it is known that 
$M_g$ acts (faithfully if $g>2$) by semi-simple isometries 
on the completion of the Teichm\"uller space 
with the Weil-Petersson metric, which has dimension $6g-6$. Thus $g\leq 
n(g)\leq 6g-7$.

The key point is to relate the property $FA_{n}$ to the finiteness of 
unitary representations. Specifically, we have the following: 

\begin{proposition}\label{faa}
If $\Gamma$ is a finitely generated group with  
property $FA_{n^2-1}$ then the 
representations $\Gamma\to SL(n,\C)$ have  finite image. 
\end{proposition}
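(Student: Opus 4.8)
The plan is to realize any representation $\rho\colon\Gamma\to SL(n,\C)$ as an isometric action on a CAT(0) space of dimension exactly $n^2-1$ and to use the hypothesis to force a fixed point, which confines the image to a maximal compact subgroup. The natural space is the symmetric space $X=SL(n,\C)/SU(n)$: it is a complete, nonpositively curved (hence CAT(0)) space, and a direct count gives $\dim_{\R}X=2(n^2-1)-(n^2-1)=n^2-1$, matching the exponent in the statement. First I would compose $\rho$ with the isometric action of $SL(n,\C)$ on $X$ and invoke property $FA_{n^2-1}$ to produce a point $x_0$ fixed by $\rho(\Gamma)$. Since the stabiliser of $x_0$ is a conjugate of $SU(n)$, this shows at once that $\rho(\Gamma)$ is contained in a compact group; in particular $\rho$ is conjugate to a unitary representation and is therefore semisimple.

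Boundedness alone does not give finiteness, so the second ingredient is integrality. Because $FA_{n^2-1}$ implies $FA_{n-1}$, the group has $n$-integral representation type, so every eigenvalue, and hence every trace, of $\rho(\gamma)$ is an algebraic integer. Assuming first that $\rho$ is irreducible, the trace field $\Q(\operatorname{tr}\rho(\gamma):\gamma\in\Gamma)$ is generated by finitely many algebraic integers (as $\Gamma$ is finitely generated), hence is a number field; an irreducible representation is defined over a finite extension of its trace field, so after conjugation $\rho$ takes values in $SL(n,L)$ for a number field $L$. I would then run the fixed-point argument of the previous paragraph not only for the given embedding but for each of the finitely many archimedean places $\sigma\colon L\to\C$, applied to the conjugate representation $\rho^{\sigma}$; this shows that $\rho^{\sigma}(\Gamma)$ is bounded for every $\sigma$.

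With boundedness at all archimedean places in hand, I would conclude by a Kronecker-type argument. If $\lambda$ is an eigenvalue of some $\rho(\gamma)$ then, for each embedding $\sigma$, the boundedness of the powers $\rho^{\sigma}(\gamma)^{k}=\rho^{\sigma}(\gamma^{k})$ forces $|\sigma(\lambda)|\le 1$; being an algebraic integer all of whose conjugates lie in the closed unit disc, $\lambda$ is a root of unity. Archimedean boundedness at a single place also rules out nontrivial unipotents, so the Jordan decomposition shows that each $\rho(\gamma)$ is semisimple of finite order. Thus $\rho(\Gamma)$ is a finitely generated torsion subgroup of $GL(n,\C)$, hence finite by the theorem of Burnside and Schur. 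The general (reducible) case follows by decomposing the now-unitary $\rho$ into irreducibles $\rho_i$ acting on spaces of dimension $n_i<n$ and applying the same reasoning, the inequality $n_i^2\le n^2-1$ leaving enough room in the hypothesis.

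The main obstacle I anticipate is the very first step: property $FA_n$ is phrased for CAT(0) cell complexes, whereas $X$ is a Riemannian symmetric space, and the relevant isometric actions need not be semisimple when $\rho$ has a unipotent part. Making the fixed-point theorem applicable to $X$ --- either by using the version of the property valid for complete CAT(0) spaces or by an equivariant approximation of $X$ by cell complexes of the same dimension, and by controlling parabolic isometries --- is the technical heart of the argument; the integrality and Kronecker steps are then comparatively routine.
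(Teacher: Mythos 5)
Your proposal is correct in outline but takes a genuinely different route from the paper's for the arithmetic half of the argument. The shared move is the first one: both you and the paper apply $FA_{n^2-1}$ to the isometric action on the symmetric space $SL(n,\C)/SU(n)$ of dimension $n^2-1$ to trap the image in a conjugate of $SU(n)$. After that the paper works with the finitely generated $\Q$-algebra $A$ generated by the matrix entries: it reduces finiteness of $\overline{\Gamma}\subset SL(n,A)$ to finiteness of all its specializations into number fields (this reduction is the heavy part, using Jordan's theorem plus a crucial lemma, proved via Noether normalization and Hilbert irreducibility, that a matrix all of whose number-field specializations have finite order itself has finite order), and then kills each specialized group $G\subset SL(n,K)$ by combining fixed points on Bruhat--Tits buildings at the non-Archimedean places (via $FA_{n-1}$) with the symmetric-space argument at the complex places, concluding by discreteness plus precompactness of $SL(n,K)$ in the adelic group $SL(n,A_K)$. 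You instead never specialize: you invoke Farb's theorem (quoted in the paper) that $FA_{n-1}$ gives $n$-integral representation type, so all eigenvalues and traces are algebraic integers; the trace field is then a number field, the (irreducible) representation descends to $SL(n,L)$ for a number field $L$, the symmetric-space fixed point applied to each Archimedean conjugate $\rho^{\sigma}$ bounds all conjugates of every eigenvalue, Kronecker's theorem makes the eigenvalues roots of unity, and Schur's theorem on finitely generated torsion linear groups finishes. This buys you freedom from the adeles and from the Hilbert-irreducibility lemma (the non-Archimedean input is outsourced to Farb's theorem, whose proof uses similar building actions); what it costs is the (ir)reducibility bookkeeping, which the paper's specialization lemma avoids entirely since it applies to a single matrix with no hypothesis.

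Two soft spots. First, the reducible case cannot be handled by literally ``applying the same reasoning'' to the irreducible constituents $\rho_i$: these land in $U(n_i)$ but generally \emph{not} in $SL(n_i,\C)$, so the proposition does not apply to them verbatim, and their Archimedean conjugates $\rho_i^{\sigma}$ act on the rank-$(n_i-1)$ symmetric space only through $PGL(n_i,\C)$, so a fixed point controls eigenvalue ratios but not the scalar part. The clean fix uses what you already have: the fixed point makes $\rho$ unitary, hence completely reducible; each irreducible summand is defined over a number field (its eigenvalues are among those of $\rho$, hence algebraic, and its trace field sits inside a finite extension of the finitely generated field of matrix entries), so after passing to the compositum $L$ the \emph{whole} representation is conjugate into $SL(n,L)$ (determinant $1$ is preserved by field embeddings), and your Archimedean argument runs on the full conjugates $\rho^{\sigma}\colon\Gamma\to SL(n,\C)$, where $FA_{n^2-1}$ applies verbatim --- no induction on $n_i$ is needed. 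Relatedly, the trace field is a number field not quite ``as $\Gamma$ is finitely generated'' (the traces of all of $\Gamma$, not just generators, are needed), but because it is an algebraic extension of $\Q$ contained in a finitely generated field. Second, the mismatch you flag between Farb's definition of $FA_m$ (CAT(0) \emph{cell complexes}) and the symmetric space is real, but the paper's own proof applies $FA_{n^2-1}$ to $SL(n,\C)/SU(n)$ in exactly the same way, so on this point you are no worse off than the source.
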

\begin{proof}
Let $\overline{\Gamma}$ be the image of $\Gamma$ under 
some homomorphism into $SL(n,\C)$.  A finitely generated subgroup 
$\overline{\Gamma}$ of $SL(n,\C)$ lies into some $SL(n,A)$, 
where $A$ is a finitely generated $\Q$-algebra  
contained in $\C$. Let $\varphi:A\to \overline{\Q}$ be a 
specialization of $A$, which  induces 
a morphism $\varphi:SL(n,A)\to SL(n,\overline{\Q})$. 
The image $\varphi(\overline{\Gamma})$ belongs then to 
some $SL(n,K)$, where $K$ is a finite extension of $\Q$. 

\begin{lemma}
If all specializations $\varphi(\overline{\Gamma})$ are finite then 
$\overline{\Gamma}$ is finite. 
\end{lemma}
\begin{proof}
Jordan's theorem says that there is some  
$f(n)$ such that any finite subgroup of $GL(n,K)$ has a normal 
abelian subgroup of index at most $f(n)$. 
The intersection of all subgroups of $\overline{\Gamma}$ 
of index at most $f(n)$ is then a finite index subgroup $U\subset 
\overline{\Gamma}$ 
such that $\varphi([U,U])=1$ for every specialization $\varphi$. 
Since specializations $\varphi$ separate the points of $A$ we have 
$[U,U]=1$. Therefore there exists a finite index 
normal abelian subgroup $U$ of $\overline{\Gamma}$. 
If $U$ is finite then $\overline{\Gamma}$ will be finite and we are done.

Let us assume from now on that $U$ is infinite.  
Since $U$ is finitely generated abelian there is an infinite order 
element $Z\in U$. 
The following lemma will show that there exists a specialization $\varphi$ 
such that $\varphi(Z)$ is of infinite order, contradicting our assumptions.  
Thus $U$ should be finite abelian and the result will follow. 

\begin{lemma}\label{crucial}
Let $Z$ be a matrix with entries in a finitely generated $\Q$-algebra $A$ 
contained in $\C$. Suppose that for every number field $K$ and any 
ring homomorphism $\varphi:A\to K$ the image $\varphi(Z)$ 
is a matrix of finite order. Then $Z$ has finite order.  
\end{lemma}
\begin{proof}
By Noether's normalization lemma (see \cite{Re}, p.63) 
there exist algebraically independent 
elements  $\xi_1,\xi_2,\ldots, \xi_p\in A$ such that  
$A$ is an integral extension of the purely transcendental 
extension $B=\Q[\xi_1,\ldots,\xi_p]$. 
Moreover  $\xi_1,\ldots,\xi_p$ form a transcendence basis for the 
field of fractions  of $A$ over $\Q$.

Let $\lambda_1,\ldots,\lambda_n$ be the eigenvalues of the
matrix $Z$. We will prove that $\lambda_j$ are roots of unity. 
First $\lambda_j$ are integral over $A$ because they are 
the roots of the characteristic polynomial of $Z$, which is a monic 
polynomial with  coefficients in $A$. The integrality is transitive and hence $\lambda_i$ are 
integral over $B$. Thus $\lambda_j$ satisfies 
an algebraic equation $P_j(\lambda_j)=0$, where $P_j\in B[X]$ is 
the minimal polynomial of $\lambda_j$ over $B$. 
The polynomial $P_j$ is monic and irreducible 
because $B$ is a unique factorization domain. This implies that, 
if we consider $P_j$ as a polynomial from 
$\Q[\xi_1,\ldots,\xi_p,X]$ then it is still an irreducible polynomial 
in the $p+1$ variables $\xi_1,\ldots,\xi_p,X$.

If $p=0$ then the fractions field of $A$ is a number 
field and thus $\lambda_i$ should be roots of unity.

Let us assume henceforth that $p\geq 1$.
Observe that any specialization $\varphi:B\to \overline{\Q}$ 
can be lifted (not uniquely) to a specialization  $\varphi:D\to \overline{\Q}$
of a finite extension $D$ of $B$. 
First, specializations of $B$ can be extended 
to possibly infinite specializations (see \cite{Weil}, Thm.6, p.31) 
of any extension $D$ of $B$. Moreover, the extended specialization is finite 
on any element of $D$ which is integral 
over $B$ (see \cite{Weil}, Prop.22, p.41).  
In particular, any specialization of $B$  
extends to $A[\lambda_1,\ldots,\lambda_n]$. 
On the other hand, observe that any 
specialization $\varphi$ of $B$ corresponds to prescribing  
the values of $\varphi(\xi_j)\in\overline{\Q}$ arbitrarily.

Hilbert's irreducibility theorem states that there exist infinitely many 
(actually a Zariski dense set of) specializations $\varphi:B\to {\Q}$ 
such that the polynomials $\varphi(P_j)\in \Q[X]$ are still irreducible. 
Since $\varphi(Z)$ is of finite order, each 
$\varphi(\lambda_j)$ is a root of unity so that 
$\varphi(P_j)$ should be a cyclotomic polynomial. 
The degree of $\varphi(P_j)$ is the degree  $d_j$ of $P_j$, 
since these are monic polynomials. 
But there are only finitely many cyclotomic polynomials 
of given degree. Let $S$ be the finite family of coefficients of all  
cyclotomic polynomials of degree smaller or equal 
to $\max(d_1,\ldots,d_n)$.   
It suffices then to choose some specialization $\varphi$ 
of $B$ for which one coefficient of some $\varphi(P_j)$ 
does not belong to $S$. For instance it suffices to choose 
a specialization for which some coefficient of $\varphi(P_j)$ is not 
in $\Z$, because cyclotomic polynomials have coefficients 
in $\Z$. 
This is possible unless all 
coefficients of the polynomials $P_j$ are independent 
on the $\xi_1,\ldots,\xi_p$. This might happen only if 
$P_j\in\Q[X]$, namely if  all its coefficients, which are elements of 
$\Q[\xi_1,\ldots,\xi_p]$, are actually constant. But in this case 
all $\lambda_j$ are algebraic integers. 
This contradicts the fact that the transcendence degree 
of the fractions field of $A$ was supposed to be $p\geq 1$. 
Therefore all eigenvalues $\lambda_j$ of $Z$ are 
roots of unity.

An alternative argument is as follows. 
The set of $\C$-valued 
specializations $\varphi:A[\lambda_1,\ldots,\lambda_n]\to \C$ 
is an irreducible affine algebraic variety of dimension $p$ and  
$\lambda_j$ is a rational function on it. 
If $\lambda_j$ is a root of unity for any $\overline{\Q}$-valued 
specialization then $|\lambda_j|$ is identically $1$. 
But a bounded regular function on a irreducible complex 
algebraic variety  should be constant. 
This implies that all $\lambda_j$ are algebraic integers and we conclude 
as above.

Eventually, it suffices to show that $Z$ is diagonalizable.
Consider the Jordan-Chevalley decomposition $Z=D+N$, where 
$D$ is semi-simple, $N$ is nilpotent and 
$DN=ND$.  The entries of the matrices $D$ and $N$ belong to  
the field of fractions of $A$ (see \cite{Ch}, Thm.7, p.71-72).  
Let $a\in A$ be the least common multiple of denominators 
arising in the entries of $D$ and $N$.  
Every specialization $\varphi$ of $A$ with the property that 
$\varphi(a)\neq 0$ extends  uniquely to a specialization, 
still denoted $\varphi$, of the localization of $A$ at $a$. In particular,  
it makes sense to consider $\varphi(D)$ and $\varphi(N)$. 
Therefore $\varphi(Z)=\varphi(D)+\varphi(N)$ 
is the Jordan-Chevalley decomposition of $\varphi(Z)$. 
But the minimal polynomial of $\varphi(Z)$ divides 
$X^s-1$, where $s$ is the order of $\varphi(Z)$. 
This implies that the minimal polynomial has distinct roots and 
so $\varphi(Z)$ is semi-simple. 
The uniqueness of the Jordan-Chevalley decomposition yields then  
$\varphi(N)=0$.  Since this holds for any specialization 
$\varphi$  such that $\varphi(a)\neq 0$ and such 
specializations separate the points of $A$ we 
derive that $N=0$. Thus $Z$ is diagonalizable 
and hence of finite order, as claimed.  
\end{proof}
\begin{remark}
We could also use (\cite{B2}, Prop. 6.1) which says that 
the image in $GL(g,\C)$ of an element of a finitely generated group 
with strong property $FA_g$ has algebraic eigenvalues. However,    
lemma \ref{crucial} can be applied to more general situations, since 
there is no assumption on $Z$.    
\end{remark}
\end{proof}

It suffices now to show that, for any specialization $\varphi$, the 
image $G=\varphi(\overline{\Gamma})$ is finite. 
Observe that, if $\Gamma$ has property $FA_{n-1}$, 
then $G=\varphi(\overline{\Gamma})$ has also property $FA_{n-1}$. 
We will show that: 

\begin{lemma}
Let $K$ be a number field. Then a  finitely generated 
subgroup $G\subset SL(n,K)$ with property $FA_{n^2-1}$ should be finite. 
\end{lemma}
\begin{proof}
We prove that, for any embedding of $K$ into a local field 
$K_v$ the image of $G$ in $SL(n,K_v)$ is precompact.

If  $G\subset SL(n,K)$  is a finitely generated subgroup with 
property $FA_{n-1}$ then its image in $SL(n,K_v)$ is precompact, 
for each non-Archimedian valuation $v$ of $K$. 
In fact, $G$ acts on the Bruhat-Tits 
building associated to $SL(n,K_v)$, which is a $(n-1)$-dimensional 
CAT(0) cell complex. The $G$-action has a fixed point because 
$G$ has property $FA_{n-1}$ and hence $G$ is contained in the stabilizer 
of a vertex, which is a compact subgroup.

In what concerns the Archimedian valuations 
it suffices to consider the complex ones. 
But $SL(n,\C)$ acts on the symmetric space $SL(n,\C)/SU(n)$ 
of non-compact type and real dimension $n^2-1$. Since this space 
is CAT(0) and $G$ has property $FA_{n^2-1}$ 
it follows that the image of $G$ into $SL(n,\C)$ 
is contained in  the stabilizer $U(n)$ for any  complex 
valuation inducing  an embedding $K\to \C$.

Eventually recall that $SL(n,K)$ embeds as a discrete subgroup of 
the special linear group $SL(n, A_K)$ over the ad\`eles ring $A_K$ of $K$. 
By above $G$ is discrete and precompact into $SL(n, A_K)$ and 
hence finite. 
\end{proof}

This proves Proposition \ref{faa}. 
\end{proof}

\begin{remark}
If $G$ is a subgroup of $U(n)\cap SL(n,\Q)$ with property $FA_{n-1}$ 
then $G$ is finite. This follows from above by using the fact that 
there is an unique complex Archimedian valuation on $\Q$, and one knows 
already that $G$ is contained in the compact group $U(n)$. 
In particular, if $\Gamma$ has property $FA_{n-1}$ 
then the image of  any homomorphism $\Gamma\to U(n)\cap SL(n,\Q)$ 
is finite.  
\end{remark}

{\em End of the proof of Proposition \ref{fa}.} 
The result of proposition \ref{faa} holds also for 
representations into $PSL(n,\C)$ and a fortiori for representations 
into $PU(n)$. Since $M_g$ has property $FA_g$ we derive the  
lower bound inequality.

Consider now the smallest (projective) quantum representation 
$M_g\to PU(d_g)$ with infinite image, for $g\geq 2$. 
This is the $SO(3)$ quantum representation in level $5$ 
(see e.g. \cite{F}), whose dimension $d_g$ is given by the Verlinde 
formula: 
\[ d_g=\left(\frac{5}{4}\right)^{g-1}\sum_{j=1}^4 \left(\sin\frac{2\pi j}{5}\right)^{2-2g}=\left\{\begin{array}{ll}
5^{g/2}F_{g-1}, \mbox{ if } g \mbox{ is even},\\
5^{(g-1)/2}(F_{g}+F_{g-2})
\end{array}\right.\]
where $F_j$ is the Fibonacci sequence $F_0=0,F_1=1, F_{n+1}=F_n+F_{n-1}$. 
For instance $d_2=5$. These mapping class group 
representations come from the so-called Fibonacci TQFT.

Moreover, it is clear that the upper bound holds for any finite 
index subgroup of $M_g$. In fact the image of a finite 
index subgroup of the mapping class group by the quantum representation 
is still infinite. This proves the claim.

\begin{remark}
The property $FA_{n-1}$ is  not 
inherited by the finite index subgroups.  
Actually $M_2$ has a finite index subgroup which surjects 
onto a free non-abelian group and hence it has not 
property $FA_1$. 
The situation is subtler for $g\geq 3$ and it seems unknown whether 
finite index subgroups of $M_g$ have property $FA_1$. 
Bridson proved in \cite{B1} that for any  normal subgroup $H$ 
of index $n$ in  $M_g$, for $g\geq 3$,  and any 
homomorphism $\phi:H\to G$ to a group $G$ acting by hyperbolic 
isometries on some complete CAT(0) space -- in particular, to $G=\Z$ --
the $n$-th powers of Dehn twists (which belong to $H$) 
lie in the kernel of $\phi$. Such homomorpisms $\phi$ have therefore 
striking similarities with the quantum representations.   
\end{remark}

Corollary \ref{fin} follows from Proposition \ref{faa} 
above and Bridson's result from \cite{B2} saying   
that $M_g$ has strong $FA_g$.

\acknowledgements{ 
We are indebted to 
Jean-Beno\^it Bost, Benson Farb, Eric Gaudron and Bertrand Remy for helpful 
discussions and to the referees for pointing out an incomplete argument in 
the first version.   
The author was partially supported by 
the ANR Repsurf: ANR-06-BLAN-0311.
}

{
\small      
      
\bibliographystyle{plain}

\begin{thebibliography}{30}      



\bibitem{A}
J.E.Andersen, {\em  Asymptotic faithfulness of the 
quantum ${\rm SU}(n)$ representations of the mapping class groups},   
Ann. of Math. (2)  163(2006),  347--368. 







%\bibitem{CS}
%M.Culler and P.B.Shalen, {\em Varieties of group representations 
%and splittings of 3-manifolds}, Ann. of Math. 117(1983), 109-146. 


\bibitem{B1}
M.R.Bridson, {\em 
Semisimple actions of mapping class groups on CAT(0) spaces}, 
arXiv:0908.0685, "The Geometry of Riemann Surfaces", 
London Mathematical Society Lecture Notes 368, 
Dedicated to Bill Harvey on his 65th birthday, to appear. 

\bibitem{B2}
M.R.Bridson, {\em 
On the dimension of CAT(0) spaces where mapping class groups act}, 
arXiv:0908.0690. 


\bibitem{Ch}
C.Chevalley, Th\'eorie des groupes de Lie II. Groupes alg\'briques, 
{\em Actualit\'es Sci. Ind.} 1152, {\em Hermann $\&$ Cie.}, Paris, 1951. 


\bibitem{CV}
M.Culler and K.Vogtmann, {\em A group-theoretic 
criterion for property ${\rm FA}$},   
Proc. Amer. Math. Soc.  124(1996), 677--683.


\bibitem{De}
P.Deligne, {\em  Extensions centrales non r\'esiduellement finies de groupes 
arithm\'etiques}, 
C. R. Acad. Sci. Paris S\'er. A-B 287(1978), no. 4, A203--A208. 
   



\bibitem{Fa}
B.Farb,  {\em Some problems on mapping class groups and moduli space}, 
in   Problems on mapping class groups and related topics,  11--55, Proc. Sympos. Pure Math., 74, Amer. Math. Soc., Providence, RI, 2006. 



\bibitem{FaMa}
B.Farb and D.Margalit, {\em A primer of mapping class groups}, preprint. 



%\bibitem{FaF}
%B.Farb, J.Franks, {\em Groups of homeomorphisms of 
%one-manifolds I: actions of nonlinear 
%groups}, June 2001 preprint. 



\bibitem{Fa2}
B.Farb, {\em Group actions and Helly's theorem}.
Advances Math. 222(2009), 1574-1588. 

\bibitem{FLM}
B.Farb, A.Lubotzky and Y.Minsky, 
{\em Rank one phenomena in mapping class groups}, 
Duke Math. J. 106(2001), 581-597. 

\bibitem{FWW}
M.H.Freedman, K.Walker and Zhenghan Wang, 
{\em Quantum $\rm SU(2)$ faithfully detects mapping class groups modulo center},  Geom. Topol.  6(2002), 523--539. 


\bibitem{Fu}
L.Funar, {\em 
Repr\'esentations du groupe symplectique et vari\'et\'es de dimension $3$},   
C. R. Acad. Sci. Paris S\'er. I Math.  316(1993),  no. 10, 1067--1072. 

\bibitem{F}
L. Funar, {\em On the TQFT representations of the mapping class groups}, 
Pacific J. Math. 188(1999), 251--274. 



\bibitem{Fu2}
L.Funar, {\em Some abelian invariants of 3-manifolds},  
Rev. Roumaine Math. Pures Appl.  45(2000), 825--861.


\bibitem{Ge}
S.Gervais, {\em 
Presentation and central extensions of mapping class groups},  
Trans. Amer. Math. Soc.  348(1996), 3097--3132. 


\bibitem{KS}
M. Korkmaz and A.I. Stipsicz, {\em
The second homology groups of mapping class groups of oriented surfaces},
Math. Proc. Cambridge Philos. Soc.  134(2003),  479--489.



\bibitem{Mar}
G.Margulis, {Discrete subgroups of  semisimple Lie groups}, 
{\em Ergebnisse Math. Grenz.} 17, Springer Verlag, Berlin, 1991. 



\bibitem{MR}
G.Masbaum and J.D.Roberts, {\em 
On central extensions of mapping class groups},  
Math. Ann.  302(1995), 131--150.

\bibitem{Re}
M.Reid, Undergraduate commutative algebra, {\em 
London Mathematical Society Student Texts} 29, 
Cambridge University Press, Cambridge, 1995. 

%\bibitem{Wong}
%Helen Wong, {\em SO(3) quantum invariants are dense}, arXiv:0808.2463. 


\bibitem{Weil}
A.Weil, Foundations of Algebraic Geometry, 
{\em American Mathematical Society Colloquium Publications} 29, 
American Mathematical Society, New York, 1946.


\end{thebibliography}

}

\end{document}